\newtheorem{tm}{Theorem}[section]
\newtheorem{lemma}[tm]{Lemma}
\newtheorem{remark}[tm]{Remark}
\newtheorem{proposition}[tm]{Proposition}
\newcommand{\subscripts}[3]{%
  \@mathmeasure\z@\displaystyle{#2}%
  \global\setbox\@ne\vbox to\ht\z@{}\dp\@ne\dp\z@
  \setbox\tw@\box\@ne
  \@mathmeasure4\displaystyle{\copy\tw@_{#1}}%
  \@mathmeasure6\displaystyle{{#2}_{#3}}%
  \dimen@-\wd6 \advance\dimen@\wd4 \advance\dimen@\wd\z@
  \hbox to\dimen@{}\mathop{\kern-\dimen@\box4\box6}%
}
\begin{document}
\title[Volatility estimation from a view point of entropy]{
Volatility estimation from a view point of entropy
}
\author[J. Akahori]{Jir\^o Akahori}
\author[R. Namba]{Ryuya Namba}
\author[A. Watanabe]{Atsuhito Watanabe}

\date{\today}
\address[J. Akahori]{Ritsumeikan University, 1-1-1 nojihagashi, Kusatsu, 525-8577, Japan}
\email{{\tt{akahori@se.ritsumei.ac.jp}}}
\address[R. Namba]{
Kyoto Sangyo University, Motoyama, Kamigamo, Kita-ku, Kyoto, 
603-8555, Japan}
\email{{\tt{rnamba@cc.kyoto-su.ac.jp}}}
\address[A. Watanabe]{
Ritsumeikan University, 1-1-1 nojihagashi, Kusatsu, 525-8577, Japan}
\email{{\tt{atsu.watanabe0507@gmail.com
}}}
\subjclass[2020]{Primary 62G05; Secondary 62G20.}

\keywords{volatility estimation, microstructure noise, SIML method, Malliavin--Mancino's Fourier estimator, likelihood function.}

%
%
\begin{abstract}
In the present paper, we first revisit the volatility estimation approach proposed by N. Kunitomo and S. Sato, and second, we show that the volatility estimator proposed by P.~Malliavin and M.E.~Mancino 
can be understood in a unified way by the approach. 
Third, we introduce an alternative estimator that might overcome the inconsistency caused by the microstructure noise of the initial observation. 
\end{abstract}

\maketitle

\section{Introduction}

\subsection{High-frequency statistics under micro-structure noise}
Statistics of continuous-time stochastic processes
with stochastic calculus, as seen in
the textbooks by Prof. Albert N. Shiryaev, 
with Prof. Robert S. Liptser
\cite{LSI, LSII}, and with Prof. Jean Jacod \cite{JS}, 
has been the basis of the more recent topics
including so-called high-frequency statistics (HFS, for short).
The HFS is also based on,
instead of stationarity, the semi-martingale principle
\begin{align}\label{qv}
    \sum_k (X_{t_k} -X_{t_{k-1}})^2 \to  [ X ]_t \,
    \text{almost surely as $ \max_k |t_k -t_{k-1}| \to 0 $, }
\end{align}
for the data of the semi-martingale  $ X $ 
sampled at 
$ 0 = t_0 < t_1 < \cdots < t_n =t $.

The HFS has been intensively studied in view of applications in finance
because convergence \eqref{qv} has some kind of reality
in financial markets that allows high-frequency trading.
The reality, however, leaves us the problem of so-called {\em microstructure noise};
the convergence \eqref{qv} is not really achieved
but only with some modification, which can be well explained by supposing that we
observe with noise whose distribution is rather irrelevant to the frequency (see, e.g. \cite{AJ}).  

There has been a great deal of literature on the construction of an estimator of $ [X]_t $ under the microstructure noise. 
In the present paper, we
introduce an approach that relies on a likelihood function
along the line of the one proposed in \cite{KS08a}
(which we call the Kunitomo-Sato type likelihood function),
which is somewhat different from those found in
Section 7.3 of \cite{AJ}. 
The approach actually unifies
the volatility estimator 
proposed by N.~Kunitomo and S.~Sato \cite{KS08a}
and the one proposed by
P.~Malliavin and M.~E.~Mancino 
\cite{MMFS, MMAS}. 
Further, we will point out that
both fail to be a consistent estimator if there is also a micro-structure noise in the initial observation. 
We then propose an alternative estimator which overcomes the difficulty, based on the likelihood function approach. 

The rest of the present paper is organized as follows. 
In section \ref{sec:1}, 
we study the Kunitomo--Sato approach, which is referred to as the 
separating information maximum likelihood
method (SIML method, for short).
\if1
After giving the general setting and the problem of HFS in Section \ref{Problem}, 
we introduce the Kunitomo-Sato estimator in section \ref{SIMLintro}, together with 
the results obtained in \cite{KS08a}.
In section \ref{Inconsis1}, 
we point out that the assumption in \cite{KS08a} that the initial observation is noise-free is critical. 
In section \ref{KSL}, 
we review the {\em separation of information} argument in \cite{KS08a}, which is the above-mentioned likelihood function approach.  
\fi
In Section \ref{MM-KS}, 
we discuss how the Malliavin--Mancino estimator can be understood as a {\em separating information} estimator. 
In Section \ref{INA}, we introduce 
an alternative estimator which could be consistent under the initial noise. 
Section \ref{conc} concludes the paper.

\section{The SIML method}\label{sec:1}

\subsection{The Problem}\label{Problem}

In the present section, 
we describe our setting and the problem, which is typical in HFS. 
Let $J \in \mathbb{N}$. Consider an It\^o process 
\begin{equation}\label{Ito}
\begin{split}
    X^j_t = X^j_0 
    + \int_0^t b^j (s) \,\mathrm{d}s 
    + \sum_{r=1}^d \int_0^t \sigma^j_r (s) \, \mathrm{d} W^r_s,
\end{split}
\end{equation}
for $ j=1, 2, \dots, J $ and $ t \in [0,1] $, 
where $ b^j $ and $ \sigma^j_r $
are integrable and square integrable adapted processes, respectively, 
for all $ j =1, 2, \dots, J$ 
and $ r =1, 2, \dots, d $,
$ \mathbf{W} \equiv (W^1, W^2,  \cdots, W^d) $ is a Wiener process, 
all of which are defined on a 
filtered probability space $ (\Omega, \mathcal{F}, \mathbf{P}, \{ \mathcal{F}_t\} ) $. 

We suppose that 
the observations $Y$ are with some additive noise $v$, 
often called the {\it micro-structure noise}. Namely, 
\begin{equation}
    Y^j_{t^j_k} \equiv X^j_{t^j_k}+ v^j_k \label{microstructure noise}
\end{equation}
for $ k = 0, 1, \dots, n^j $ and $ j = 1, 2, \dots, J$, 
where $ \{v^j_k\}_{j, k} $
is a family of 
random variables whose distribution 
will be specified later, and 
$ n^j $  is an integer. 

We are interested in  
constructing an estimator 
$ (V^{j,j', h})_{j, j'=1, 2, \dots, J} $
of $ h $-
integrated volatility matrix defined by
\begin{equation*}
    \int_0^1 h(s) \Sigma^{j,j'} (s) \,\mathrm{d}s
    :=\sum_{r=1}^d \int_0^1 h(s) \sigma_r^j (s) \sigma_r^{j'} (s)\, \mathrm{d}s,
\end{equation*}
for $ j, j'=1, 2, \dots, J $, 
where $ h $ is a given function
or 
out of the observations, 
which is consistent 
in the sense that each
$ V^{j,j'} $ converges to 
$ \int_0^t h(s) \Sigma^{j,j'} (s) \,{\rm d}s $
in probability as $ n \to \infty $, under the condition that 
\begin{equation}\label{rho}
    \rho_n := \max_{j,k} |t^j_k - t^j_{k-1}| \to 0
\end{equation}
as $ n \to \infty $.

\subsection{The SIML estimator}\label{SIMLintro}
Let us put
\begin{equation*}
    p^n_{k,l} = \sqrt{\frac{2}{n+ \frac{1}{2}}}
    \cos \left( \left(l - \frac{1}{2}\right)
    \pi \left( \frac{k-\frac{1}{2}}{n+\frac{1}{2}}
    \right) \right)
\end{equation*}
for $ k,l=1, 2, \dots, n $, $ n \in \mathbf{N} $. 
Note that $ \mathbf{P}_n = (p^n_{k,l}) $
forms an orthogonal matrix. 
The {\it separating information maximum likelihood} (SIML) estimator, introduced 
by Prof. Naoto Kunitomo and Prof. Seisho Sato 
in \cite{KS08a} (see also \cite[Chapters 3 \& 5]{KSK}), 
is given by 
\begin{equation}\label{SIMLestimator}
    \begin{split}
        V_{n,m_n}^{j,j'}
        := \frac{n}{m_n} \sum_{l=1}^{m_n} \left( \sum_{k=1}^{n^j}
        p^{n^j}_{k,l} \Delta Y^j_k \right)
        \left(\sum_{k'=1}^{n^{j'}} p^{n^{j'}}_{k',l} \Delta Y^{j'}_{k'} \right),  \qquad j, j'=1, 2, \dots, J,
    \end{split}
\end{equation}
where $ m_n (\ll n) $ is an integer,
and we understand 
$ \Delta $ to be the difference operator given by
$ (\Delta a)_k = a_k -a_{k-1} $ for a sequence $\{a_k\}_k$. 
We then write 
\begin{equation*}
\Delta Y^j_k = Y^j_{t^j_{k}} - Y^j_{t^j_{k-1}}, \qquad j, j'=1, 2, \dots, J, \, k=1, 2, \dots, n^j.
\end{equation*}
They have proved the following two properties:
\begin{enumerate}[{\bf (a)}]
    \item ({\it the consistency}):
    the convergence in probability of $ V^{j,j'}_{n, m_n} $ to $ \int_0^1 \Sigma^{j,j'} (s) \,\mathrm{d}s $ as $ n \to \infty $
    is attained, provided 
    that $ m_n = o (n^{1/2})$, and
    \item ({\it the asymptotic normality of the error}): the stable convergence of 
    \begin{equation*}
    \begin{split}
       & \sqrt{m_n}\left(V^{j,j'}_{n, m_n}-\int_0^1 \Sigma^{j,j'} (s) \,\mathrm{d}s \right) \\
        & \to N \left(0, \int_0^1 \left(\Sigma^{j,j}(s) \Sigma^{j',j'}(s) + (\Sigma^{j,j'} (s))^2 \right) \mathrm{d}s
        \right)
        \end{split}
    \end{equation*}
    holds true as $ n \to \infty$ 
    if $ m_n = o (n^{2/5})$. 
\end{enumerate}
when the following hold (see \cite{KSK} for details).
\begin{enumerate}[(i)]
    \item the observations 
are equally spaced, that is, $ t^j_k \equiv k/n $, 
\item $ \{v^j_k\}_{k=1}^n $ are
independently and identically distributed, and the common distribution is independent of the choices of $ n $ and $ j $,
\item $ v_0^j \equiv 0 $, 
\item $ \sigma $ is deterministic and $ \mu \equiv 0 $. 
\end{enumerate}

\begin{remark}
In \cite{ANW}, the consistency and the asymptotic normality are proven when $ b $ and $ \Sigma $ are adapted 
processes with some regularity under a general sampling scheme, but without micro-structure noise. 
\end{remark}

\begin{remark}
    In the series of papers 
    \cite{KS08b, KS10, KS2, KS13, KSK, MK15, KK21, KS22}, 
    N. Kunitomo and his collaborators intensively studied the practical applications as well as some extensions of the SIML method.
\end{remark}

\subsection{Inconsistency caused by the initial noise}\label{Inconsis1}
When $ v^j_0 \ne 0 $,
we emphasize that the consistency of the SIML estimator may fail. 
Let us see this
when $ J=1 $. 
We start with the decomposition of $  V_{n,m_n} \equiv V_{n,m_n}^{1,1} $ as
\begin{equation*}
    \begin{split}
        V_{n,m_n} \equiv V_{n,m_n}^{1,1} 
       & = \frac{n}{m_n} \sum_{l=1}^{m_n} \left( \sum_{k=1}^{n}
        p^{n}_{k,l} \Delta X_k \right)^2
        + \frac{2n}{m_n} \sum_{l=1}^{m_n} \left( \sum_{k=1}^{n}
        p^{n}_{k,l} \Delta X_k \right) \left( \sum_{k=1}^{n}
        p^{n}_{k,l} \Delta v_k \right) \\
      & + \frac{n}{m_n} \sum_{l=1}^{m_n} \left( \sum_{k=1}^{n}
        p^{n}_{k,l} \Delta v_k \right)^2. 
    \end{split}
\end{equation*}
Under the general sampling scheme for which consistency is established in \cite{ANW}, 
we know that 
\begin{align*}
    \begin{aligned}
        \int_0^1 \Sigma (s) \, \mathrm{d}s - \frac{n}{m_n} \sum_{l=1}^{m_n} \left( \sum_{k=1}^{n}
        p^{n}_{k,l} \Delta X_k \right)^2
    \end{aligned}
\end{align*}
converges to zero in $ L^2 $, and so its mean also converges to zero.
On the other hand, 
we have clearly
\begin{align*}
    \mathbf{E} \left[\frac{2n}{m_n} \sum_{l=1}^{m_n} \left( \sum_{k=1}^{n}
        p^{n}_{k,l} \Delta X_k \right) \left( \sum_{k=1}^{n}
        p^{n}_{k,l} \Delta v_k \right)\right] = 0.
\end{align*}
The following observation claims that we cannot establish the consistency
of the estimator with $ m_n = o(1) $ as $ n \to \infty $. 
\begin{proposition}\label{prop1}
For any pair of $ (n, m_n) $ such that $ m_n < (n+1)/2  $, we have 
\begin{align}\label{lb1}
    \begin{aligned}
         \mathbf{E} \left[  \frac{n}{m_n} \sum_{l=1}^{m_n} \left( \sum_{k=1}^{n}
        p^{n}_{k,l} \Delta v_k \right)^2\right] \geq \frac{1}{2} \mathbf{E}[(v_0)^2]. 
    \end{aligned}
\end{align}
\end{proposition}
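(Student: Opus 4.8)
The plan is to isolate the contribution of the initial noise $v_0$ to the quadratic form and to show that this contribution alone already produces a positive constant, uniformly in $(n,m_n)$. First I would rewrite the inner sum by summation by parts: with $\Delta v_k = v_k - v_{k-1}$, Abel's transformation yields
\begin{equation*}
\sum_{k=1}^{n} p^n_{k,l}\,\Delta v_k = -\,p^n_{1,l}\, v_0 + \sum_{k=1}^{n-1}\bigl(p^n_{k,l}-p^n_{k+1,l}\bigr)v_k + p^n_{n,l}\, v_n,
\end{equation*}
so the coefficient attached to the initial noise $v_0$ is exactly $-p^n_{1,l}$, and, by the telescoping of the interior coefficients, the coefficients of all the $v_k$ sum to zero.

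Next, since the $v_k$ are i.i.d. and centered, the cross terms in the second moment vanish and
\begin{equation*}
\mathbf{E}\left[\left(\sum_{k=1}^{n}p^n_{k,l}\,\Delta v_k\right)^{2}\right] = \mathbf{E}[(v_0)^2]\left\{(p^n_{1,l})^2 + \sum_{k=1}^{n-1}\bigl(p^n_{k,l}-p^n_{k+1,l}\bigr)^2 + (p^n_{n,l})^2\right\}.
\end{equation*}
Every summand is non-negative, so I would retain only the $v_0$ term, bounding the right-hand side below by $\mathbf{E}[(v_0)^2]\,(p^n_{1,l})^2$. Summing over $l=1,\dots,m_n$ and multiplying by $n/m_n$ then reduces the assertion \eqref{lb1} to the purely trigonometric estimate $\frac{n}{m_n}\sum_{l=1}^{m_n}(p^n_{1,l})^2 \ge \tfrac12$.

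It remains to establish this inequality, and it is here that the hypothesis $m_n < (n+1)/2$ plays its decisive role. Explicitly $(p^n_{1,l})^2 = \frac{2}{n+\frac12}\cos^2\!\bigl(\tfrac{(l-1/2)\pi}{2n+1}\bigr)$, and for each $l$ with $1\le l\le m_n < (n+1)/2$ one has $\tfrac{(l-1/2)\pi}{2n+1} < \tfrac{(n/2)\pi}{2n+1} < \tfrac{\pi}{4}$, whence $\cos^2(\cdot) > \tfrac12$. Consequently $\sum_{l=1}^{m_n}(p^n_{1,l})^2 > \frac{m_n}{n+\frac12}$ and $\frac{n}{m_n}\sum_{l=1}^{m_n}(p^n_{1,l})^2 > \frac{n}{n+\frac12} = \frac{2n}{2n+1}\ge\tfrac12$, which in fact beats the claimed constant.

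The only genuinely delicate point is the constraint $m_n < (n+1)/2$: it is exactly what keeps every cosine argument below $\pi/4$, so that each retained term stays above $\tfrac12$; without it the per-term bound would fail for the largest indices $l$. The remaining ingredients---the summation by parts, the cancellation of cross terms under independence, and the final elementary estimate---are routine. I would also remark that centering of the noise is what allows the second moment to be written through the squared coefficients alone; had the noise a nonzero mean, the vanishing of the coefficient sum noted above would instead produce $\Var(v_0)$ in place of $\mathbf{E}[(v_0)^2]$, so the centering hypothesis is genuinely needed for the stated form of the bound.
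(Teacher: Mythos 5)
Your proof is correct, and its skeleton coincides with the paper's: the same summation by parts isolating the coefficient $-p^n_{1,l}$ of $v_0$, the same discarding of the remaining non-negative terms (justified by independence and centering of the noise), and the same reduction to the trigonometric estimate $\frac{n}{m_n}\sum_{l=1}^{m_n}(p^n_{1,l})^2 \geq \tfrac12$. Where you genuinely diverge is in how this last estimate is proved. The paper evaluates the sum exactly: it writes
\begin{equation*}
\sum_{l=1}^{m_n}\cos^2\left(\frac{(2l-1)\pi}{2(2n+1)}\right)
= \frac{m_n}{2} + \frac{1}{4}\,\frac{\sin\left(\frac{2m_n\pi}{2n+1}\right)}{\sin\left(\frac{\pi}{2n+1}\right)}
\end{equation*}
via a geometric (Dirichlet-kernel) summation, and the hypothesis $m_n<(n+1)/2$ enters only to guarantee that the sine term is non-negative. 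You instead use the same hypothesis pointwise: each argument $\frac{(l-1/2)\pi}{2n+1}$ stays below $\pi/4$, so each $\cos^2$ exceeds $\tfrac12$, and no closed-form summation is needed. Both routes in fact deliver the sharper constant $\frac{2n}{2n+1}\geq\tfrac23$, so the stated $\tfrac12$ is not tight under either argument. Your version is more elementary and arguably more transparent about the role of the constraint on $m_n$; the paper's exact formula has the side benefit of showing quantitatively how the lower bound behaves as $m_n$ approaches and passes the threshold, where the sine term changes sign. Your closing remark on centering (that a nonzero mean would replace $\mathbf{E}[(v_0)^2]$ by $\Var(v_0)$, thanks to the telescoping of the coefficients) is a correct observation that the paper leaves implicit.
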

\begin{proof}
The claim is almost obvious since it holds that
\begin{align*}
    \begin{aligned}
       & \mathbf{E}\left[  \left( \sum_{k=1}^{n}
        p^{n}_{k,l} \Delta v_k \right)^2\right] \\
        & =\mathbf{E}\left[  \left( \sum_{k=1}^{n-1}
        (p^{n}_{k,l} -p^n_{k+1,l}) v_k 
        + p^n_{n,l} v_n - p^n_{1,l} v_0 \right)^2\right] 
       \geq \mathbf{E}[v_0^2] (p^n_{1,l})^2,
    \end{aligned}
\end{align*}
and 
\begin{align*}
    \begin{aligned}
   \frac{n}{m_n}
   \sum_{l=1}^{m_n}(p^n_{1,l})^2
   =   
   \frac{2n}{(n+ \frac{1}{2})m_n} \sum_{l=1}^{m_n} 
    \cos^2 \left(
     \frac{2l-1}{2n+1}\frac{\pi}{2} 
    \right). 
    \end{aligned}
\end{align*}
In fact, we have 
\begin{align*}
    \begin{aligned}
       & \sum_{l=1}^{m_n} 
    \cos^2 \left(
     \frac{2l-1}{2n+1}\frac{\pi}{2} 
     \right) = \frac{m_n}{2} 
    + \frac{1}{2} \sum_{l=1}^{m_n} \cos \left(\frac{(2l-1)\pi}{2n+1}\right)
    \end{aligned}
\end{align*}
and 
\begin{align*}
    \begin{aligned}
    \frac{1}{2} \sum_{l=1}^{m_n} \cos \pi\left(\frac{2l-1}
{2n+1}\right) 
&= \frac{1}{4} \sum_{l=1}^{m_n} \left( e^{i \pi \frac{2l-1}{2n+1}} + e^{-i \pi \frac{2l-1}{2n+1}} \right) \\
&= \frac{1}{4} \sum_{l=-m_n+1}^{m_n} e^{i \pi \frac{2l-1}{2n+1}} 
= \frac{e^{-i\pi \frac{2m_n -1}{2n+1}}}{4} 
\frac{e^{i \pi \frac{2m_n}{2n+1}}- e^{- i \pi \frac{2m_n}{2n+1}}}{e^{i \pi \frac{1}{2n+1}}-e^{-i \pi \frac{1}{2n+1}}} \\
&= \frac{1}{4} \frac{\sin \left( \frac{2m_n \pi}{2n+1} \right) }{\sin  \left(\frac{\pi}{2n+1}\right)}, 
\end{aligned}
\end{align*}
which ensure \eqref{lb1}. 
\end{proof}

\subsection{The Kunitomo--Sato likelihood function}\label{KSL}

Let us introduce a prototype of Kunitomo--Sato (KS, for short) likelihood function.  
To focus on the study of estimations under micro-structure noise, we work only on the one-dimensional It\^o process case for simplicity.
We set 
\begin{align*}
    X_t = X_0 + \int_0^t \sigma (s) \, \mathrm{d}W_s 
    + \int_0^t \mu (s) \, \mathrm{d}s, \qquad t > 0, 
\end{align*}
where $ W $ is a one-dimensional Wiener process, 
$ \sigma $ and $ \mu $ are square- and absolute- integrable processes adapted to the filtration generated by $ W $, respectively.
Observations are set to be 
\begin{align*}
    Y_{k/n} = X_{k/n} + v_k, \qquad
    k=0,1, \dots, n.
\end{align*}

Even though the process $ Y $ is not a Gaussian process since $ \sigma $ is not deterministic in general, 
it turned out to be beneficial 
to consider the likelihood function of $ \mathbf{\Delta Y} := \{ \Delta Y_k =Y_{(k+1)/n} -Y_{k/n}, k=0, 1, \dots, n-1\} $ 
pretending that $ \sigma $ is a deterministic constant (we write $ \sigma^2 =:c $), $ \mu \equiv 0 $ and $ v_k \sim N (0, \nu) $, 
\underline{$k=1, 2, \dots, n $}, 
are zero-mean Gaussian random variables.

Under these specifications, we have 
\if0
\begin{align*}
    \begin{aligned}
        \mathbf{E}[\Delta Y_k\Delta Y_l] = \left(\frac{\sigma^2}{n} + 2 \nu \right)\delta_{k,l}  
        - \nu( \delta_{k+1,l} 1_{\{k\leq n-2\}} + \delta_{k,l+1} 1_{\{l\leq n-2\}}), 0 \leq k,l \leq n-1,  
    \end{aligned}
\end{align*}
where $ \delta_{\cdot, \cdot} $ is the Kronecker delta.
That is, 
\fi
\begin{align*}
    \mathbf{\Delta Y} \sim N \left(0, \left(\frac{c}{n}
+ 2 \nu \right) I_n - \nu J_n \right),
\end{align*}
where $ I_n $ is the $ n \times n $ identity matrix, and 
the $ n \times n $ matrix $ J_n $ is defined by
\begin{align}\label{Jacobi1}
    J_n := 
    \begin{pmatrix}
        1 & 1 & 0 & \cdots & 0 & 0 & 0 \\
        1 & 0 & 1 & \cdots & 0 & 0 & 0 \\
        0 & 1 & 0 & \ddots & 0 & 0 & 0 \\
        \vdots & \vdots & \ddots & \ddots & \ddots & \vdots & \vdots \\
        0 & 0 & 0 & \ddots & 0 & 1 & 0 \\
        0 & 0 & 0 & \cdots & 1 & 0 & 1 \\
        0 & 0 & 0 & \cdots & 0 & 1 & 0
    \end{pmatrix}. 
\end{align}
It is noteworthy that 
the matrix $ \mathbf{P}_n $ 
introduced in Section \ref{SIMLintro} does diagonalize $ J_n $.
Moreover, the diagonal is decreasing. To be precise, we have 
\begin{align*}
    \begin{aligned}
        J_n = \mathbf{P}_n 
        \mathrm{diag}
            \left[ 2\cos \left(\frac{\pi}
{2n+1}\right), 2\cos \left(\frac{3\pi}
{2n+1}\right), \dots,  2\cos \left(\frac{(2n-1)\pi}
{2n+1}\right)\right]
        \mathbf{P}_n^\top,
    \end{aligned}
\end{align*}
where we denote by $ \mathrm{diag}[b_1, b_2, \dots, b_n] $
the diagonal matrix whose components are $ b_1, b_2, \dots, b_n $, 
and $\mathbf{P}_n^\top$ means the transpose of $\mathbf{P}_n$.

We can then write down the log-likelihood function 
of $ \mathbf{z} = n^{-1/2}
\mathbf{P}_n \mathbf{\Delta Y}_n $ as 
\begin{align*}
    \begin{aligned}
        L_n (c, \nu) 
       := -\frac{1}{2} \sum_{k=1}^n \log (a_{k,n} \nu + c) -\frac{1}{2} \sum_{k=1}^n \frac{\mathbf{z}_k^2}{a_{k,n}\nu + c},    
    \end{aligned}
\end{align*}
where we put
\begin{align*}
    \begin{aligned}
        a_{k,n} := 4 n \sin^2 \left(\frac{2k-1}{2n+1}\frac{\pi}{2}  \right), \quad k=1, 2, \dots, n. 
    \end{aligned}
\end{align*}

Kunitomo and Sato \cite{KS08a}
introduced the following decomposition of $ L_n $:
\begin{align*}
    \begin{aligned}
        2 L_n = L_n^{(1)} + L_n^{(2)} + L_n^{(r)},
    \end{aligned}
\end{align*}
where we put
\begin{align}\label{KSL0}
    \begin{aligned}
        L_n^{(1)} (c) 
        := -m \log c - \frac{1}{c}\sum_{k=1}^n \mathbf{z}_k^2,  
    \end{aligned}
\end{align}
which we call the prototype of the KS likelihood function, 
\begin{align*}
    \begin{aligned}
        L_n^{(2)} (\nu) 
        := - \sum_{k=n+1-l}^n \log (a_{k,n} \nu ) -\frac{1}{\nu} \sum_{k=n+1-l}^n \frac{\mathbf{z}_k^2}{a_{k,n}},
    \end{aligned}
\end{align*}
and $ L_n^{(r)} := 2 L_n - L_n^{(1)} - L_n^{(2)}$.
Note that the decomposition depends on the choice of $ (m,l) $. 
The term {\it separating information} comes from this decomposition. 

Now we see that the SIML estimator \eqref{SIMLestimator} maximize $ L_n^{(1)} $, and 
\begin{align*}
    \nu^*_{n,l} := \frac{1}{l} \sum_{k=n+1-l}^n \frac{\mathbf{z}_k^2}{a_{k,n}}
\end{align*}
maximize $ L_n^{(2)} $. 
With a proper choice of $ (m_n, l_n) $, 
we may have 
\begin{align*}
    \lim_{n \to \infty} L_n^{(r)} (V_{n,m_n}, \nu^*_{n,l_n} ) =0,
\end{align*}
so that the estimators are asymptotically optimal
(see \cite[Chapter 3]{KSK} for a discussion).

\section{Malliavin--Mancino Method and its KS likelihood}\label{MM-KS}
\subsection{Malliavin--Mancino's Fourier method}\label{MM}
The Malliavin-Mancino's Fourier method (MM method, for short), 
introduced in \cite{MMFS,BMM, MMAS}
(see also \cite{MRS}, which is a good review of the literature), 
is an estimation method 
for the spot volatility $ \Sigma (s) $
in Section~\ref{Problem}, 
by constructing an estimator of the 
Fourier series of $ \Sigma $. 
The series consists of estimators 
of Fourier coefficients
given by 
\begin{equation}\label{Fourier-estimator}
    \begin{split}
&\widehat{\Sigma^{j,j'}_{n,m_n}}(q) \\ 
&:=\frac{1}{2m_n+1} \sum_{|l|\leq m_n} \left( \sum_{k=1}^{n^j}e^{2\pi\sqrt{-1} (l+q) t^j_{k-1} }
        \Delta Y^j_k \right)
        \left(\sum_{k'=1}^{n^{j'}}e^{-2\pi\sqrt{-1} l t_{k'-1}^{j'} }  \Delta Y^{j'}_{k'} \right) 
    \end{split}
\end{equation}
for $j, j'=1, 2, \dots, J$ and $ q \in \mathbf{Z} $. 
We note that 
$ \widehat{\Sigma^{j,j'}_{n,m_n}}(0)$
is a variant of 
the SIML estimator \eqref{SIMLestimator}. 
In fact, 
we have 
\begin{align*}
    \begin{aligned}
       & \widehat{\Sigma^{j,j'}_{n,m_n}}(0) \\
    &=\frac{1}{2m_n+1} \sum_{|l|\leq m_n} \left( \sum_{k=1}^{n^j}e^{2\pi\sqrt{-1} q t^j_{k-1} }
        \Delta Y^j_k \right)
    \left(\sum_{k'=1}^{n^{j'}}e^{-2\pi\sqrt{-1} l t_{k'-1}^{j'} }  \Delta Y^{j'}_{k'} \right) \\
    &= \frac{1}{2m_n+1} \Bigg\{
     \left( \sum_{k=1}^{n^j}
        \Delta Y^j_k \right) 
    \left(\sum_{k'=1}^{n^{j'}}\Delta Y^{j'}_{k'} \right) \\
   & \qquad  + 
    2\sum_{l=1}^{m_n}
    \left( \sum_{k=1}^{n^j}\cos {2\pi l t^j_{k-1} }
        \Delta Y^j_k \right) 
    \left(\sum_{k'=1}^{n^{j'}}\cos {2\pi l t_{k'-1}^{j'} }  \Delta Y^{j'}_{k'} \right)
    \\
    & \qquad  + 2 \sum_{l=1}^{m_n}\left( \sum_{k=1}^{n^j}\sin {2\pi l t^j_{k-1} }
        \Delta Y^j_k \right) 
    \left(\sum_{k'=1}^{n^{j'}}\sin {2\pi l t_{k'-1}^{j'} }  \Delta Y^{j'}_{k'} \right)
    \Bigg\},
    \end{aligned}
\end{align*}
and so, if we put
\begin{align*}
    \begin{aligned}
         q^j_{k,l} :=  
         \begin{cases}
             n_j^{-1/2} & l=0 \\
             n_j^{-1/2} \sqrt{2} \cos (2 \pi l t^j_{k-1}) & l=2,4,6,  \dots \\
             n_j^{-1/2} \sqrt{2} \sin (2 \pi l t^j_{k-1}) & l= 1, 3, 5, \dots, 
         \end{cases}
    \end{aligned}
\end{align*}
for $ k=1,2, \cdots, n_j $, we have 
\begin{align}\label{MMinteg}
    \begin{aligned}
        \widehat{\Sigma^{j,j'}_{n,m_n}}(0)
        =  \frac{ \sqrt{n_j n_{j'}} }{2 m_n +1} \sum_{l=0}^{2m_n} \sum_{k=1}^{n_j} q^{{j}}_{k,l} \Delta Y_k^j  \sum_{k'=1}^{n_{j'}} q^{{j'}}_{k,l} \Delta Y_{k'}^{j'}. 
    \end{aligned}
\end{align}

Now we see the similarity between the SIML estimator \eqref{SIMLestimator}
and the MM estimator \eqref{MMinteg}.
In the following section, we show that the difference of the two estimators 
can be explained in terms of the KS likelihood function. 

\subsection{A KS likelihood function of the MM method}
From now on, we consider the case where 
$ d=1 $, so that the dependence on the coordinate $ j $ vanishes, 
where $ n $ is odd, so that we rewrite it as $2n +1 $, 
and where $ t_k = k/(2n+1) $ for $ k=0,1, \dots, 2n+1 $.
Then, we obtain 
a $ (2n+1) \times (2n+1)$ matrix $ \mathbf{Q}_n:= (q^{2n+1}_{k,l})_{0\leq k,l \leq 2n}$
with 
\begin{align*}
      \begin{aligned}
         q^{2n+1}_{k,l} :=  
         \begin{cases}
           ( 2 n+1)^{-1/2} & l=0 \\
             ( 2 n+1)^{-1/2}  \sqrt{2} \cos \left(\dfrac{l k \pi}{2n+1}\right) & l=2, 4, \dots, 2n \vspace{2mm}\\
          ( 2 n+1)^{-1/2}  \sqrt{2} \sin \left(\dfrac{(l+1) k \pi}{2n+1}\right) & l=1, 3, \dots, 2n-1. 
         \end{cases}
    \end{aligned}
\end{align*}
\begin{lemma}
The matrix $ \mathbf{Q}_n $ is an orthogonal matrix and 
\begin{align*}
    \begin{aligned}
        &\mathbf{Q}_n \mathrm{diag} \Bigg[2, 2\cos \left(\frac{2\pi}{2n+1}\right), 2\cos \left(\frac{2\pi}{2n+1}\right), \\
        &\hspace{1cm}\dots, 2\cos \left(\frac{2n\pi}{2n+1}\right),2\cos \left(\frac{2n \pi}{2n+1}\right) \Bigg] \mathbf{Q}_n^\top = \widetilde{J}_n,
    \end{aligned}
\end{align*}
where the $(2n+1) \times (2n+1)$ matrix $\widetilde{J}_n$ is defined by
\begin{align}\label{JacobiMM}
    \widetilde{J}_n := 
    \begin{pmatrix}
        0 & 1 & 0 & \cdots & 0 & 0 & 1 \\
        1 & 0 & 1 & \cdots & 0 & 0 & 0 \\
        0 & 1 & 0 & \ddots & 0 & 0 & 0 \\
        \vdots & \vdots & \ddots & \ddots & \ddots & \vdots & \vdots \\
        0 & 0 & 0 & \ddots & 0 & 1 & 0 \\
        0 & 0 & 0 & \cdots & 1 & 0 & 1 \\
        1 & 0 & 0 & \cdots & 0 & 1 & 0
    \end{pmatrix}. 
\end{align}
\end{lemma}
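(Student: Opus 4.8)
The plan is to recognize $\widetilde{J}_n$ in \eqref{JacobiMM} as a symmetric circulant matrix---concretely, the adjacency matrix of the cycle on $N:=2n+1$ vertices---and to identify the columns of $\mathbf{Q}_n$ with its real, orthonormal eigenvectors. Reading off the two corner entries of \eqref{JacobiMM}, I would first record that $\widetilde{J}_n$ acts on $v=(v_0,\dots,v_{2n})^\top$ by $(\widetilde{J}_n v)_k = v_{k-1}+v_{k+1}$, with indices taken modulo $N$; the corner $1$'s are precisely what enforces this cyclic wrap-around at $k=0$ and $k=2n$. This is the structural fact that drives everything, and it mirrors the role played by $\mathbf{P}_n$ and $J_n$ in Section~\ref{SIMLintro}.

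Granting this, the diagonalization reduces to a one-line trigonometric identity applied column by column. For the cosine column $l=2m$, whose $k$-th entry is proportional to $\cos(2mk\pi/N)$, the identity $\cos(A-B)+\cos(A+B)=2\cos A\cos B$ yields $(\widetilde{J}_n v)_k = 2\cos(2m\pi/N)\,\cos(2mk\pi/N)$, so this column is an eigenvector of eigenvalue $2\cos(2m\pi/N)$. The companion sine column $l=2m-1$, with $k$-th entry proportional to $\sin(2mk\pi/N)$, produces the same eigenvalue through $\sin(A-B)+\sin(A+B)=2\sin A\cos B$, while the constant column $l=0$ gives eigenvalue $2$. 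This reproduces exactly the paired diagonal matrix displayed in the statement, the pair $\{l=2m-1,\,l=2m\}$ sharing the eigenvalue $2\cos(2m\pi/N)$ for $m=1,\dots,n$.

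It then remains to prove that $\mathbf{Q}_n$ is orthogonal, which I would establish directly from the discrete orthogonality of the trigonometric system on the nodes $k/N$. Writing $\zeta:=e^{2\pi\sqrt{-1}/N}$ and using $\sum_{k=0}^{N-1}\zeta^{jk}=N$ when $j\equiv 0\pmod N$ and $0$ otherwise, the three relevant sums $\sum_k\cos\cos$, $\sum_k\sin\sin$ and $\sum_k\cos\sin$ over frequencies in $\{1,\dots,n\}$ evaluate to $\tfrac{N}{2}\delta_{m,m'}$, $\tfrac{N}{2}\delta_{m,m'}$ and $0$ respectively, with the constant column contributing squared norm $N$; here it is convenient that $N$ is odd, so that none of the relevant frequencies $m\pm m'$ or $2m$ with $1\le m,m'\le n$ is a multiple of $N$. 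The normalizations $(2n+1)^{-1/2}$ and $(2n+1)^{-1/2}\sqrt{2}$ then make each column a unit vector and all columns mutually orthogonal, giving $\mathbf{Q}_n^\top\mathbf{Q}_n=I_{2n+1}$. Combining this orthonormality with the eigenvalue relation of the previous paragraph yields $\widetilde{J}_n=\mathbf{Q}_n D\,\mathbf{Q}_n^\top$ with $D$ the claimed diagonal.

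The computation is routine and follows the same template as the $J_n$-diagonalization recorded earlier, so I do not expect a genuine obstacle. The one point demanding care is matching the indexing conventions of $\mathbf{Q}_n$ to the standard Fourier eigenvectors---in particular the $l\mapsto l+1$ shift in the sine argument and the interleaving of sines at odd $l$ with cosines at even $l$---and handling the degenerate two-dimensional eigenspaces for $m=1,\dots,n$, where the orthogonality of the sine and cosine vectors at a common frequency must be verified by hand rather than inferred from the spectral theorem.
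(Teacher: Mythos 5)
Your proof is correct and takes essentially the same route as the paper's: both hinge on the product-to-sum identities $2\cos A\cos B=\cos(A+B)+\cos(A-B)$ and $2\sin A\cos B=\sin(A+B)+\sin(A-B)$ read cyclically modulo $2n+1$, which is exactly the wrap-around structure encoded by the corner entries of $\widetilde{J}_n$. The only difference is that you also verify the orthogonality of $\mathbf{Q}_n$ explicitly via discrete Fourier orthogonality (using that $2n+1$ is odd), a point the paper's proof asserts in the statement but never spells out; this is a completion of the same argument rather than a different method.
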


\begin{proof}
It is a consequence of the following 
elementary equations:
\begin{align*}
    \begin{aligned}
        2 \cos \left(\frac{2 l \pi}{2n+1}\right)
           \cos \left(\frac{2 k l \pi}{2n+1}\right)
           = \cos \left(\frac{2 l(k+1) \pi}{2n+1}\right) + \cos \left(\frac{2 l(k-1) \pi}{2n+1}\right)
    \end{aligned}
\end{align*}
and 
\begin{align*}
    \begin{aligned}
        2 \cos \left(\frac{2 l \pi}{2n+1}\right)
           \sin \left(\frac{2 k l \pi}{2n+1}\right)
           = \sin \left(\frac{2 l(k+1) \pi}{2n+1}\right) + \sin \left(\frac{2 l(k-1) \pi}{2n+1}\right),
    \end{aligned}
\end{align*}
which are valid for any $ l $ and $ k $ modulo $ 2n+1 $. 
\end{proof}

The above lemma 
insists that the KS likelihood function 
associated with the MM estimator $ \widehat{\Sigma^{j,j'}_{n,m_n}}(0) $ of \eqref{MMinteg}
is the one obtained by replacing $ J_n $
of \eqref{Jacobi1}
with \eqref{JacobiMM}.

\subsection{Inconsistency caused by the initial and the last noises}
Looking back the procedure 
for constructing KS likelihood function
from the observation structure in Section~\ref{KSL}, 
we notice that 
the matrix $ \widetilde{J}_n $
might come from the equation 
$ v_n =v_0 $, which is of course unrealistic. 
The following proposition
may reflect this structure. 

\begin{proposition}\label{prop2-1}
We assume that $ v_0 $ and $ v_{2n+1} $ are independent 
and also independent of 
$ v_1, \cdots, v_{2n} $. 
Then, for any pair of $ (n, m_n) $ such that $ m_n < n $, we have 
\begin{align}\label{lb2-1}
    \begin{aligned}
         \mathbf{E} \left[  \frac{2n+1}{2m_n+1} \sum_{l=0}^{2m_n} \left( \sum_{k=1}^{2n+1}
        q^{2n+1}_{k-1,l} \Delta v_k \right)^2\right] \geq  \mathbf{E}[(v_0)^2 + (v_{2n+1})^2]. 
    \end{aligned}
\end{align}
\end{proposition}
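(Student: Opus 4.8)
The plan is to mimic the proof of Proposition~\ref{prop1}, but now isolating the contributions of \emph{both} the initial noise $v_0$ and the terminal noise $v_{2n+1}$. First I would apply summation by parts (Abel summation) to the inner sum. With $\Delta v_k = v_k - v_{k-1}$, a direct rearrangement, separating the two boundary indices from the interior ones, gives
\begin{align*}
\sum_{k=1}^{2n+1} q^{2n+1}_{k-1,l}\,\Delta v_k
= -q^{2n+1}_{0,l}\,v_0 + q^{2n+1}_{2n,l}\,v_{2n+1}
+ \sum_{i=1}^{2n}\bigl(q^{2n+1}_{i-1,l}-q^{2n+1}_{i,l}\bigr)\, v_i,
\end{align*}
so that $v_0$ and $v_{2n+1}$ appear with the explicit coefficients $-q^{2n+1}_{0,l}$ and $q^{2n+1}_{2n,l}$, while all interior noises are absorbed into a single remainder.

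Next, using the hypothesis that $v_0$ and $v_{2n+1}$ are independent of one another and of $v_1,\dots,v_{2n}$, together with the standing assumption that the noise is centred (consistent with the $N(0,\nu)$ specification of Section~\ref{KSL}), every cross term in the expansion of the square has vanishing expectation. Since the remainder term contributes a non-negative quantity, this yields, for each $l$,
\begin{align*}
\mathbf{E}\left[\Bigl(\sum_{k=1}^{2n+1} q^{2n+1}_{k-1,l}\,\Delta v_k\Bigr)^2\right]
\geq (q^{2n+1}_{0,l})^2\,\mathbf{E}[v_0^2] + (q^{2n+1}_{2n,l})^2\,\mathbf{E}[v_{2n+1}^2].
\end{align*}
It then remains to sum over $0\le l\le 2m_n$ and multiply by $(2n+1)/(2m_n+1)$, which reduces the problem to evaluating the two column-weight sums $\sum_{l=0}^{2m_n}(q^{2n+1}_{0,l})^2$ and $\sum_{l=0}^{2m_n}(q^{2n+1}_{2n,l})^2$.

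The crux is these two evaluations. For the first, I would use that at $k=0$ the odd-index ($\sin$) entries vanish while the even-index ($\cos$) entries equal $\sqrt{2}\,(2n+1)^{-1/2}$; counting the $m_n$ even indices plus the $l=0$ term gives $\sum_{l=0}^{2m_n}(q^{2n+1}_{0,l})^2 = (2m_n+1)/(2n+1)$. For the second, at $k=2n$ the entries are no longer constant, and here I would pair the even index $l=2p$ with the odd index $l=2p-1$ for $p=1,\dots,m_n$; their squared entries are $2(2n+1)^{-1}\cos^2(2p\pi/(2n+1))$ and $2(2n+1)^{-1}\sin^2(2p\pi/(2n+1))$, so the Pythagorean identity collapses each pair to $2(2n+1)^{-1}$ and again produces $\sum_{l=0}^{2m_n}(q^{2n+1}_{2n,l})^2 = (2m_n+1)/(2n+1)$. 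This is precisely where the constraint $m_n<n$ enters: it keeps every index within $\{0,\dots,2n\}$ and makes the even/odd pairing exact.

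Inserting these two values, the prefactor $(2n+1)/(2m_n+1)$ cancels each sum exactly, so the $v_0$ and $v_{2n+1}$ contributions become precisely $\mathbf{E}[v_0^2]$ and $\mathbf{E}[v_{2n+1}^2]$, establishing \eqref{lb2-1}. I expect the main obstacle to be the $k=2n$ computation: unlike the transparent $k=0$ case, the individual entries there are not constant, and one must recognise the consecutive even/odd pairing so that $\cos^2+\sin^2=1$ delivers the clean collapse.
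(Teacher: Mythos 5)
Your proposal is correct and follows essentially the same route as the paper's own proof: the same summation by parts isolating $v_0$ and $v_{2n+1}$, the same per-$l$ lower bound using independence (and centring) to discard cross terms and the non-negative interior remainder, and the same evaluation of the two column sums $\sum_{l=0}^{2m_n}(q^{2n+1}_{0,l})^2=\sum_{l=0}^{2m_n}(q^{2n+1}_{2n,l})^2=(2m_n+1)/(2n+1)$, with the $k=2n$ case handled exactly as in the paper by pairing consecutive even/odd indices so that $\cos^2+\sin^2=1$ collapses each pair. There is nothing to correct.
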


\begin{proof}
As we did in the proof of 
Proposition \ref{prop1}, 
we observe 
\begin{align}\label{lb2-0}
    \begin{aligned}
       & \mathbf{E}\left[  \left( \sum_{k=1}^{2n+1}
        q^{2n+1}_{k-1,l} \Delta v_k \right)^2\right] \\
        & =\mathbf{E}\left[  \left( \sum_{k=1}^{2n}
        (q^{2n+1}_{k-1,l} -q^{2n+1}_{k,l}) v_k 
        + q^{2n+1}_{2n,l} v_{2n+1} - q^{2n+1}_{0,l} v_0 \right)^2\right] \\
      & \geq \mathbf{E}[v_{2n+1}^2] (q^{2n+1}_{2n,l})^2
      + \mathbf{E}[v_0^2] (q^n_{0,l})^2. 
    \end{aligned}
\end{align}
Since it holds that
\begin{align*}
      \begin{aligned}
         q^{2n+1}_{0,l} :=  
         \begin{cases}
           ( 2 n+1)^{-1/2} & l=0 \\
             ( 2 n+1)^{-1/2}  \sqrt{2}  & l=2, 4, \dots, 2n \\
          0 & l=1, 3, \dots, 2n-1, 
         \end{cases}
    \end{aligned}
\end{align*}
and 
\begin{align*}
      \begin{aligned}
         q^{2n+1}_{2n,l} :=  
         \begin{cases}
           ( 2 n+1)^{-1/2} & l=0 \\
             ( 2 n+1)^{-1/2}  \sqrt{2} \cos \left(\dfrac{2 n l \pi}{2n+1}\right) & l=2, 4, \dots, 2n  \vspace{2mm}\\
          ( 2 n+1)^{-1/2}  \sqrt{2} \sin \left(\dfrac{2 n(l+1) \pi}{2n+1}\right) & l=1, 3, \dots, 2n-1, 
         \end{cases}
    \end{aligned}
\end{align*}
we have 
\begin{align*}
    \begin{aligned}
   \sum_{l=0}^{2m_n}(q^n_{0,l})^2
   =   \frac{2m_n+1}{2n+1}
    \end{aligned}
\end{align*}
and 
\begin{align*}
    \begin{aligned}
   \sum_{l=0}^{2m_n}(q^n_{2n,l})^2
   =   \frac{1}{2n+1}
   + \frac{2}{2n+1} 
   \sum_{l'=1}^{m_n}
   \left\{\cos^2 \left(\frac{2 \pi l' \pi}{2n+1}\right) + \sin^2 \left(\frac{2 \pi l' \pi}{2n+1}\right) \right\}
   =  \frac{2m_n+1}{2n+1}, 
    \end{aligned}
\end{align*}
which ensure \eqref{lb2-1}. 
\end{proof}

\begin{remark}
The robustness of the MM estimator against the micro-strucrue noise has been well recognised (see \cite{MS1, MS2}). 
The inequality \eqref{lb2-1}, 
together with the inequality \eqref{lb2-0},
states that 
the consistency of the MM estimator cannot be established without assuming 
$ v_0 =v_{2n+1} $, 
but this does not contradict the observations given in \cite{MS1, MS2}. 
\end{remark}
\section{Initial-Noise Adjusted Estimator}\label{INA}

\subsection{Adjustment for the initial noise}
As we have seen, 
the KS estimator \eqref{SIMLestimator} cannot be consistent
if $ v_0 \ne 0 $. 
In this section, we propose an adjusted estimator
by replacing $ J_n $ of \eqref{Jacobi1}
with
\begin{align*}
    \widetilde{J}_n' := 
    \begin{pmatrix}
        0 & 1 & 0 & \cdots & 0 & 0 & 0 \\
        1 & 0 & 1 & \cdots & 0 & 0 & 0 \\
        0 & 1 & 0 & \ddots & 0 & 0 & 0 \\
        \vdots & \vdots & \ddots & \ddots & \ddots & \vdots & \vdots \\
        0 & 0 & 0 & \ddots & 0 & 1 & 0 \\
        0 & 0 & 0 & \cdots & 1 & 0 & 1 \\
        0 & 0 & 0 & \cdots & 0 & 1 & 0
    \end{pmatrix},
\end{align*}
which reflects 
$ v_0 \sim N (0, \nu) $
in defining the KS likelihood function.

\begin{lemma}
We define an $ n \times n $ matrix 
    $ \mathbf{R}_n = (r^n_{l,k}) $ by
    \begin{align}\label{diag}
    \begin{aligned}
        r^n_{l,k} := 
       \sqrt{\frac{2}{n+1}} \sin \left( \frac{kl\pi}{n+1}\right), \qquad
        l, k=1, 2, \dots, n.
    \end{aligned}
    \end{align}   
Then, $ \mathbf{R}_n $ is an orthogonal matrix and 
\begin{align*}
    \begin{aligned}
         \widetilde{J}_n' = \mathbf{R}_n 
        \mathrm{diag}
            \left[2\cos \left(\frac{\pi}{n+1}\right), 2\cos \left(\frac{2\pi}{n+1}\right), 
            \dots,  2\cos \left(\frac{n\pi}{n+1}\right)\right]
        \mathbf{R}_n^\top.
    \end{aligned}
\end{align*}
\end{lemma}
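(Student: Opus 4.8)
The plan is to diagonalize $\widetilde{J}_n'$ by exhibiting its eigenvectors explicitly, exactly as one does for the discrete Dirichlet Laplacian on a path of $n$ nodes, of which $\widetilde{J}_n'$ is the adjacency matrix. First I note that the entry $r^n_{l,k}=\sqrt{2/(n+1)}\,\sin(kl\pi/(n+1))$ is symmetric in $l$ and $k$, so that $\mathbf{R}_n^\top=\mathbf{R}_n$; hence it suffices to prove that $\mathbf{R}_n$ is orthogonal and that each of its columns is an eigenvector of $\widetilde{J}_n'$ with the claimed eigenvalue, after which the factorization $\widetilde{J}_n'=\mathbf{R}_n\,\mathrm{diag}[\dots]\,\mathbf{R}_n^\top$ is immediate.

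For the eigenvector step, fix $l$ and set $\mathbf{u}_l=(u_1,\dots,u_n)$ with $u_k=\sin(kl\pi/(n+1))$. The nonzero pattern of $\widetilde{J}_n'$ gives $(\widetilde{J}_n'\mathbf{u}_l)_k=u_{k-1}+u_{k+1}$ with the convention $u_0=u_{n+1}=0$, and I would apply the product-to-sum identity $\sin((k-1)\theta)+\sin((k+1)\theta)=2\sin(k\theta)\cos\theta$ with $\theta=l\pi/(n+1)$ to obtain $(\widetilde{J}_n'\mathbf{u}_l)_k=2\cos(l\pi/(n+1))\,u_k$ for the interior indices. The boundary rows $k=1$ and $k=n$ require no separate treatment, because $u_0=\sin 0=0$ and $u_{n+1}=\sin(l\pi)=0$ make the same formula valid there; this is the only place where one must be a little careful, and it is exactly where the structure of $\widetilde{J}_n'$ (zero diagonal, no wrap-around) matters. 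Thus $\mathbf{u}_l$ is an eigenvector with eigenvalue $2\cos(l\pi/(n+1))$.

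It remains to check orthonormality of the normalized columns. The eigenvalues $2\cos(l\pi/(n+1))$, $l=1,\dots,n$, are pairwise distinct because $\cos$ is strictly decreasing on $(0,\pi)$, so by the spectral theorem for the real symmetric matrix $\widetilde{J}_n'$ the eigenvectors belonging to different $l$ are automatically orthogonal. For the normalization I would compute $\sum_{k=1}^n\sin^2(kl\pi/(n+1))$ via $\sin^2\vartheta=(1-\cos 2\vartheta)/2$ and evaluate $\sum_{k=1}^n\cos(2kl\pi/(n+1))$ as the real part of a geometric sum; since $e^{2\pi\mathrm{i}l/(n+1)}\neq 1$ for $1\le l\le n$, that cosine sum equals $-1$, giving $\sum_{k=1}^n\sin^2(kl\pi/(n+1))=(n+1)/2$. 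Hence $\|\mathbf{u}_l\|^2\cdot 2/(n+1)=1$, the factor $\sqrt{2/(n+1)}$ is the correct normalization, and $\mathbf{R}_n$ is orthogonal.

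Combining the three steps yields $\widetilde{J}_n'\mathbf{R}_n=\mathbf{R}_n\,\mathrm{diag}[2\cos(\pi/(n+1)),\dots,2\cos(n\pi/(n+1))]$, and multiplying on the right by $\mathbf{R}_n^\top=\mathbf{R}_n^{-1}$ produces the asserted factorization. No step presents a genuine difficulty; the computation is entirely analogous to the diagonalization of $J_n$ recalled in Section~\ref{KSL} and to the preceding lemma, the only subtlety being the automatic satisfaction of the Dirichlet boundary conditions at the first and last rows.
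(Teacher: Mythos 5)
Your proof is correct and follows essentially the same route as the paper's: the product-to-sum identity $\sin((k-1)\theta)+\sin((k+1)\theta)=2\cos\theta\,\sin(k\theta)$ establishes the eigenvector relation, and the sum $\sum_{k=1}^n\sin^2\left(kl\pi/(n+1)\right)=(n+1)/2$, evaluated by the same geometric-sum computation, gives the normalization. You additionally make explicit what the paper leaves implicit — the Dirichlet boundary conventions $u_0=u_{n+1}=0$, the symmetry $\mathbf{R}_n^\top=\mathbf{R}_n$, and the pairwise orthogonality of columns via distinct eigenvalues of a symmetric matrix — which are useful clarifications but not a different argument.
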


\begin{proof}
    It is obvious from the equation 
    \begin{align*}
        \begin{aligned}
            2 \cos \left(\frac{k \pi }{n+1}\right)
            \sin \left(\frac{k l \pi }{n+1}\right)
            = \sin \left(\frac{k (l+1)\pi}{n+1}\right)
            + \sin \left(\frac{k (l-1)\pi}{n+1}\right),
        \end{aligned}
    \end{align*}
for $ k,l=1, 2, \dots, n $, and 
\begin{align*}
    \begin{aligned}
        \sum_{l=1}^n \sin^2 \left(\frac{k l \pi }{n+1}\right)
        &= \frac{1}{2 }\sum_{l=1}^n
        \left\{ 1 -\cos \left(\frac{ k l \pi }{n+1}\right) \right\} \\
        & = \frac{n}{2} - \frac{1}{4 }\sum_{l=1}^n
        \left(e^{2 \pi \frac{kl}{n+1}} + e^{-2 \pi \frac{kl}{n+1}}\right) \\
        & = \frac{n+1}{2}
    \end{aligned}
\end{align*}
for $ k=1, 2, \dots, n $. 
\end{proof}

\subsection{Initial-noise adjusted estimator}
Given \eqref{diag}, 
we may consider an estimator of 
the integrated volatility $
\int_0^1 \Sigma^{j,j'} (s) \,\mathrm{d}s $ gievn by
\begin{equation}\label{na-SIMLestimator}
    \begin{split}
        \tilde{V}_{n,m_n}^{j,j'}
        := \frac{n+1}{m_n} \sum_{l=1}^{m_n} \left( \sum_{k=1}^{n^j}
        r^{n^j}_{k,l} \Delta Y^j_k \right)
        \left(\sum_{k'=1}^{n^{j'}} r^{n^{j'}}_{k',l} \Delta Y^{j'}_{k'} \right)  
    \end{split}
\end{equation}
instead of \eqref{SIMLestimator}.
The good news is that, 
in contrast with Proposition \ref{prop1}, 
we have
\begin{proposition}
Assume that $ m_n = o (n^{1/2}) $ as $ n \to \infty $. 
Then, we have 
\begin{align}\label{lb2}
    \begin{aligned}
       \lim_{n \to \infty}  \mathbf{E} \left[  \frac{n+1}{m_n} \sum_{l=1}^{m_n} \left( \sum_{k=1}^{n}
        r^{n}_{k,l} \Delta v_k \right)^2\right] =0.  
    \end{aligned}
\end{align}
\end{proposition}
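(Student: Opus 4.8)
The plan is to exploit the diagonalization furnished by the preceding Lemma, just as the spectral structure of $J_n$ was used in Section~\ref{KSL}. Write $\Delta\mathbf{v} := (v_1-v_0, v_2-v_1, \dots, v_n-v_{n-1})^\top$ and let $\mathbf{r}_l := (r^n_{1,l}, r^n_{2,l}, \dots, r^n_{n,l})^\top$ denote the $l$-th column of $\mathbf{R}_n$, so that the quantity inside the expectation in \eqref{lb2} is $\tfrac{n+1}{m_n}\sum_{l=1}^{m_n}(\mathbf{r}_l^\top\Delta\mathbf{v})^2$. First I would record the covariance structure of $\Delta\mathbf{v}$: under the standing assumption that $v_0, v_1, \dots, v_n$ are uncorrelated with common variance $\nu := \mathbf{E}[v_0^2]$, a direct computation of $\mathbf{E}[\Delta v_k\,\Delta v_{k'}]$ gives
\begin{align*}
    \mathbf{E}[\Delta\mathbf{v}\,\Delta\mathbf{v}^\top] = \nu\,(2 I_n - \widetilde{J}_n').
\end{align*}
The crucial point is that the corner entry of $\widetilde{J}_n'$ is $0$ rather than $1$: this is exactly the modification encoding that $v_0$ carries noise (whereas the $(1,1)$-entry $1$ of $J_n$ reflected $v_0\equiv 0$), and it is what will produce the contrast with Proposition~\ref{prop1}.

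Second, since $\mathbf{R}_n$ is orthogonal with $\widetilde{J}_n'\mathbf{r}_l = 2\cos\!\big(\tfrac{l\pi}{n+1}\big)\mathbf{r}_l$ and $\|\mathbf{r}_l\| = 1$, I would read off
\begin{align*}
    \mathbf{E}\big[(\mathbf{r}_l^\top\Delta\mathbf{v})^2\big]
    = \nu\,\mathbf{r}_l^\top(2I_n - \widetilde{J}_n')\mathbf{r}_l
    = \nu\Big(2 - 2\cos\!\big(\tfrac{l\pi}{n+1}\big)\Big)
    = 4\nu\,\sin^2\!\Big(\frac{l\pi}{2(n+1)}\Big).
\end{align*}
In contrast with the SIML case, where $\tfrac{n}{m_n}\sum_l (p^n_{1,l})^2$ stayed bounded away from $0$, the relevant eigenvalues $2-2\cos(l\pi/(n+1))$ now accumulate at $0$ as $l$ decreases, so the low-frequency modes selected by the estimator contribute negligibly.

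Finally I would substitute this into the expression and estimate. Using $\sin x \le x$, valid since $l\pi/(2(n+1)) \to 0$ on the range $l \le m_n$,
\begin{align*}
    \frac{n+1}{m_n}\sum_{l=1}^{m_n} 4\nu\,\sin^2\!\Big(\frac{l\pi}{2(n+1)}\Big)
    \le \frac{n+1}{m_n}\cdot\frac{\nu\pi^2}{(n+1)^2}\sum_{l=1}^{m_n} l^2
    = \frac{\nu\pi^2}{m_n(n+1)}\cdot\frac{m_n(m_n+1)(2m_n+1)}{6},
\end{align*}
which is of order $m_n^2/n$. Since $m_n = o(n^{1/2})$ by hypothesis, we have $m_n^2/n \to 0$, and \eqref{lb2} follows. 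The only genuine step is the identification of the covariance matrix with $\nu(2I_n - \widetilde{J}_n')$ and the use of its diagonalization from the Lemma; everything afterwards is the elementary bound $\sin x \le x$ together with $\sum_{l\le m_n} l^2 \sim m_n^3/3$. I expect the identification of the covariance structure — and in particular convincing the reader that it is the vanishing corner of $\widetilde{J}_n'$ that drives the decay — to be the conceptual crux, the analytic estimate being routine.
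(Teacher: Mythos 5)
Your proof is correct, and it takes a genuinely different route from the paper's. The paper's own proof never invokes the diagonalization lemma: it applies summation by parts, $\sum_{k=1}^n r^n_{k,l}\Delta v_k = \sum_{k=1}^{n-1}(r^n_{k,l}-r^n_{k+1,l})v_k + r^n_{n,l}v_n - r^n_{1,l}v_0$, expands the second moment using uncorrelatedness of $v_0,\dots,v_n$ with common variance, and bounds the resulting trigonometric sums term by term, reaching the same $O(m_n^2/n)$ rate. You instead identify the covariance matrix of the increment vector as $\nu\,(2I_n-\widetilde{J}_n')$ and feed it through the lemma's spectral decomposition to get the \emph{exact} per-mode identity $\mathbf{E}\big[(\mathbf{r}_l^\top\Delta\mathbf{v})^2\big]=4\nu\sin^2\!\big(l\pi/(2(n+1))\big)$; this is consistent with the paper's intermediate expression, since one can check $\tfrac{2}{n+1}\sum_{k=0}^{n}\cos^2\!\big((2k+1)l\pi/(2(n+1))\big)=1$. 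What your approach buys: sharp constants (an identity rather than an inequality), a shorter computation, and a cleaner conceptual picture of why the adjustment works — the eigenvalues $2-2\cos\!\big(l\pi/(n+1)\big)$ of the "Dirichlet-type" matrix $\widetilde{J}_n'$ vanish quadratically at the low frequencies the estimator retains, whereas in Proposition \ref{prop1} the corner entry of $J_n$ forces the bounded-below contribution $(p^n_{1,l})^2$; this also ties the proposition directly to the KS-likelihood structure that motivated $\mathbf{R}_n$ in the first place. What the paper's route buys: it is self-contained and parallels the proofs of Propositions \ref{prop1} and \ref{prop2-1} by isolating the boundary terms $r^n_{1,l}v_0$ and $r^n_{n,l}v_n$ explicitly. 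Note that both arguments rest on the same (implicit in the paper) assumption that $v_0$ is uncorrelated with $v_1,\dots,v_n$ and shares their variance, so you have not strengthened the hypotheses.
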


\begin{proof}
Let us start with observing  
\begin{align*}
    \begin{aligned}
       & \mathbf{E}\left[  \left( \sum_{k=1}^{n}
        r^{n}_{k,l} \Delta v_k \right)^2\right] \\
        & =\mathbf{E}\left[  \left( \sum_{k=1}^{n-1}
        (r^{n}_{k,l} -r^n_{k+1,l}) v_k 
        + r^n_{n,l} v_n - r^n_{1,l} v_0 \right)^2\right] \\
       & = \mathbf{E}[v_1^2]
       \left\{ \sum_{k=1}^{n-1}
        (r^{n}_{k,l} -r^n_{k+1,l})^2 
        + (r^n_{n,l})^2 + (r^n_{1,l})^2
       \right\} \\
       & = \frac{2\mathbf{E}[v_1^2]}{n+1} \sum_{k=1}^{n-1} 
      \left\{ \sin \left(\frac{kl\pi}{n+1}\right) -
      \sin \left(\frac{(k+1)l\pi}{n+1}\right)\right\}^2
      +  \frac{4\mathbf{E}[v_1^2]}{n+1} \sin^2 \left(\frac{l\pi}{n+1}\right).
    \end{aligned}
\end{align*}
Since it holds that
\begin{align*}
    \begin{aligned}
       & \sum_{k=1}^{n-1} 
      \left\{ \sin \left(\frac{kl\pi}{n+1}\right) -
      \sin \left(\frac{(k+1)l\pi}{n+1}\right)\right\}^2 \\
      & = 4 \sin^2 \left(\frac{l\pi}{2(n+1)}\right) \sum_{k=1}^{n-1} \cos^2 \left(\frac{(2k+1)l \pi}{2(n+1)}\right) 
      \leq \frac{l^2 \pi^2}{n+1}
    \end{aligned}
\end{align*}
and 
\begin{align*}
    \sin^2 \left(\frac{l\pi}{n+1}\right) \leq \frac{l^2 \pi^2}{(n+1)^2}
\end{align*}
for $ l \leq m \ll n $, 
we obtain
\begin{align*}
    \begin{aligned}
       & \mathbf{E} \left[  \frac{n+1}{m_n} \sum_{l=1}^{m_n} \left( \sum_{k=1}^{n}
        r^{n}_{k,l} \Delta v_k \right)^2\right] \\
        &\leq \frac{2\mathbf{E}[v_1^2] \pi^2}{m_n} 
        \left\{ \frac{1}{(n+1)} + \frac{1}{(n+1)^2}
        \right\}\sum_{l=1}^m l^2, 
    \end{aligned}
\end{align*}
which proves \eqref{lb2}. 
\end{proof}

\section{Concluding Remark}\label{conc}
We have accomplished the aim of the paper to introduce and show the benefits of the approach based on the KS likelihood function, which should be generalized to a more sophisticated entropy argument in the future.

\if0
In the previous section, we have seen that 
the assumption $ v_0^j \equiv 0 $ is critical in the SIML estimator to be consistent and to be the maximizer of the 
KS likelihood function \eqref{KSL0}. 
In this section, 
we consider the case where 
$ v^j_0 = v^j_n \equiv 0 $, 
that is, we replace $ J_n $ of \eqref{Jacobi1}
with
\begin{align*}\label{Jacobi2}
    \widetilde{J}_n := 
    \begin{pmatrix}
        1 & 1 & 0 & \cdots & 0 & 0 & 0 \\
        1 & 0 & 1 & \cdots & 0 & 0 & 0 \\
        0 & 1 & 0 & \ddots & 0 & 0 & 0 \\
        \vdots & \vdots & \ddots & \ddots & \ddots & \vdots & \vdots \\
        0 & 0 & 0 & \ddots & 0 & 1 & 0 \\
        0 & 0 & 0 & \cdots & 1 & 0 & 1 \\
        0 & 0 & 0 & \cdots & 0 & 1 & 1
    \end{pmatrix}. 
\end{align*}

We start with the following lemma. 

\begin{lemma}
    Let $ n = 2n'+1 $, $ n'=1, 2, 3, \dots $. We define an $ n \times n $ matrix 
    $ \mathbf{Q}^n = (q^n_{l,k}) $ by
    \begin{align*}
    \begin{aligned}
        q^n_{l,k} := 
        \begin{cases}
        \displaystyle \frac{1}{\sqrt{n}} 
        & k=1, \, l=1, 2, \dots, n \vspace{2mm}\\
        \displaystyle \frac{\sqrt{2}}{\sqrt{n}}\cos\left(\frac{(2n-2l+1)(k-1)\pi}{2n}\right)
        & k=2,3, \dots, n, \, l=1, 2, \dots, n. 
        \end{cases}
    \end{aligned}
    \end{align*}   
Note that 
    \begin{align*}
        q_{1, k}^n = (-1)^{k-1}\frac{\sqrt{2}}{\sqrt{n}}\cos \left( \frac{(k-1)\pi}{2n} \right), \qquad  
        q_{n, k}^n = \frac{\sqrt{2}}{\sqrt{n}} \cos \left( \frac{(k-1)\pi}{2n} \right)
    \end{align*}
for $ k=2, 3, \dots, n $. Then, the matrix $ \mathbf{Q}_n $ is orthogonal and diagonalize $ \widetilde{J}_n $. Namely, it holds that
\begin{align*}
    \begin{aligned}
         \widetilde{J}_n = \mathbf{Q}_n 
        \mathrm{diag}
            \left[ 2, 2\cos \frac{\pi}{n}, 2\cos \frac{2\pi}{n}, 
            \dots,  2\cos \frac{(n-1)\pi}{n}\right]
        \mathbf{Q}_n^\top.
    \end{aligned}
\end{align*}

\end{lemma}
\begin{proof}

It is clear that $ (q_{1, 1}^n, q_{2, 1}^n, \dots, q_{n, 1}^n)^\top=(1, 1, \dots, 1)^\top $ satisfies $ \widetilde{J}_n \mathbf{x}=2\mathbf{x} $,
which means that $ 2 $ is one of eigenvalues of $ \widetilde{J}_n $ with 
eigenvector $ (1, 1, \dots, 1)^\top $. 
Furthermore, for $ k=2, 3, \dots, n $, we have 
    \begin{align*}
        \widetilde{J}_n 
            \begin{pmatrix}
            q_{1, k}^n \\ q_{2, k}^n \\ \vdots \\ q_{n-1, k}^n \\ q_{n, k}^n 
            \end{pmatrix} 
         &= \begin{pmatrix}
            q_{1, k}^n + q_{2, k}^n \\ 
            q_{1, k}^n + q_{3, k}^n \\ 
            \vdots \\
            q_{n-2, k}^n + q_{n, k}^n \\
            q_{n-1, k}^n + q_{n, k}^n 
            \end{pmatrix}.
    \end{align*}
Then, it holds that 
    \begin{align*}
        q_{1, k}^n + q_{2, k}^n
        &= \frac{\sqrt{2}}{\sqrt{n}} \cos\left(\frac{(2n-1)(k-1)\pi}{2n}\right) + \frac{\sqrt{2}}{\sqrt{n}} \cos\left(\frac{(2n-3)(k-1)\pi}{2n}\right) \\
        &= 2\frac{\sqrt{2}}{\sqrt{n}}\cos\left( \frac{(n-1)(k-1)\pi}{n} \right) \cos \left( \frac{(k-1)\pi}{2n} \right) \\
        &= 2(-1)^{k-1}\frac{\sqrt{2}}{\sqrt{n}}\cos \left( \frac{(k-1)\pi}{n} \right) \cos \left( \frac{(k-1)\pi}{2n} \right) 
        = 2\cos \left(\frac{(k-1)\pi}{n}\right)  q_{1, k}^n, \\
        q_{n-1, k}^n + q_{n, k}^n 
        &= \frac{\sqrt{2}}{\sqrt{n}}\cos\left(\frac{3(k-1)\pi}{2n}\right) + \frac{\sqrt{2}}{\sqrt{n}}\cos\left(\frac{(k-1)\pi}{2n}\right) \\
        &= 2\frac{\sqrt{2}}{\sqrt{n}} \cos \left(\frac{(k-1)\pi}{n}\right) \cos \left( \frac{(k-1)\pi}{2n} \right)
        = 2\cos \left(\frac{(k-1)\pi}{n}\right)  q_{n, k}^n,
    \end{align*}
and 
    \begin{align*}
        &q_{j, k}^n + q_{j+2, k}^n \\
        &= \frac{\sqrt{2}}{\sqrt{n}}\cos\left(\frac{(2n-2j+1)(k-1)\pi}{n}\right) + \frac{\sqrt{2}}{\sqrt{n}}\cos\left(\frac{(2n-2j-3)(k-1)\pi}{n}\right) \\
        &= 2\frac{\sqrt{2}}{\sqrt{n}}
        \cos \left( \frac{(2n-2j-1)(k-1)\pi}{n} \right)
        \cos \left( \frac{(k-1)\pi}{n} \right) \\
        &= 2\cos \left( \frac{(k-1)\pi}{n} \right)  q_{j+1, k}^n
    \end{align*}
by elementary formulas of trigonometric functions. 
These imply that the rest of the eigenvalues of $ \widetilde{J}_n $ is 
$ 2\cos \frac{(k-1)\pi}{n} $ with eigenvectors $ (q_{1,k}^n, q_{2,k}^n, \dots, q_{n, k}^n)^\top $ for $ k=2, 3, \dots, n $. 
\end{proof}

\newpage
We set 
\begin{align*}
    J_n := 
    \begin{pmatrix}
        0 & 1 & 0 & \cdots & 0 & 0 & 0 \\
        1 & 0 & 1 & \cdots & 0 & 0 & 0 \\
        0 & 1 & 0 & \ddots & 0 & 0 & 0 \\
        \vdots & \vdots & \ddots & \ddots & \ddots & \vdots & \vdots \\
        0 & 0 & 0 & \ddots & 0 & 1 & 0 \\
        0 & 0 & 0 & \cdots & 1 & 0 & 1 \\
        0 & 0 & 0 & \cdots & 0 & 1 & 0
    \end{pmatrix}.
\end{align*}
Then, it turns out that the 
and show that 
a variant of the Malliavin-Mancino Fourier 
(MMF for short) is the one that maximize 
the corresponding KS likelihood function.

Further, if $ j = j' $, 
\begin{align*}
    \begin{aligned}
        \mathrm{AMSE} 
        &=  \frac{n^2}{(2m_n+1)^2} 
   {\bf E} \Big[\Big(\sum_{|l|\leq m_n} \sum_{k=0}^{n}\sum_{k'=0}^{n^{}} 
       \Delta q^{n}_{k+1,l} \overline{\Delta q^{n_{}}_{k'+1,l}}  v_k  v_{k'} \Big)^2\Big] \\
       &= 
       \frac{n^2}{(2m_n+1)^2} 
   \bigg((\mathbf{E}[v^4] - 3 (\mathbf{E}[v^2])^2 )
 \sum_{k=0}^{n}  \left( \sum_{|l|\leq m_n} 
   |\Delta q^{n}_{k+1,l}|^2 \right)^2 \\
  & + (\mathbf{E}[v^2])^2 
    \left( \sum_{k=0}^{n}\sum_{|l|\leq m_n} 
   |\Delta q^{n}_{k+1,l}|^2 \right)^2 
   + (\mathbf{E}[v^2])^2 
    \sum_{k=0}^{n}\sum_{k'=0}^{n}\left( \sum_{|l|\leq m_n} 
   \Delta q^{n}_{k+1,l}
   \overline{\Delta q^{n}_{k'+1,l}}\right)^2 \\
   & (
   \mathbf{E}[v^2])^2 
   \sum_{|l|\leq m_n} \sum_{|l'|\leq m_n}\left( \sum_{k=0}^{n}
   \Delta q^{n}_{k+1,l}
   \overline{\Delta q^{n}_{k+1,l'}}\right)^2
   \bigg)
    \end{aligned}
\end{align*}
with the convention that 
$ \Delta q_{1,l}^n = q_{1,l}^n $ and $  \Delta q_{n+1,l}^n = - q_{n,l}^n $, where 
we have suppressed the script $ j $ wherever it is used. 
We note that 
\begin{align*}
    \begin{aligned}
     \sum_{|l|\leq m_n} 
   |\Delta q^{n}_{k+1,l}|^2 
    & = \frac{1}{n}\sum_{|l|\leq m_n} 
     (e^{2\pi\sqrt{-1} l t_{k+1} }-e^{2\pi\sqrt{-1} l t_k })
         (e^{-2\pi\sqrt{-1} l t_{k+1} }-e^{-2\pi\sqrt{-1} l t_{k} })\\
         &= \frac{4}{n}\sum_{|l|\leq m_n}\sin^2 \pi l (t_{k+1} - t_k) 
         \leq \frac{ 4 \rho_n^2}{n}
          \sum_{|l|\leq m_n} l^2
        \leq C \frac{ m_n^3}{n^3},
    \end{aligned}
\end{align*}
so that 
\begin{align*}
    \begin{aligned}
     \sum_{k=0}^{n}  \left( \sum_{|l|\leq m_n} 
   |\Delta q^{n}_{k+1,l}|^2 \right)^2 
   \leq C \frac{m^6}{n^5}
    \end{aligned}
\end{align*}
and 
\begin{align*}
    \begin{aligned}
     \left( \sum_{k=0}^{n}\sum_{|l|\leq m_n} 
   |\Delta q^{n}_{k+1,l}|^2 \right)^2 
   \leq C \frac{m^6}{n^4}.
    \end{aligned}
\end{align*}
Also, 
\begin{align*}
    \begin{aligned}
     &  \sum_{k=1}^{n-1} \Delta q^n_{k+1,l}\overline{\Delta q^n_{k+1,l'}} 
   = \frac{1}{n}\sum_{k=1}^{n-1} 
     (e^{2\pi\sqrt{-1} l t_{k+1} }-e^{2\pi\sqrt{-1} l t_k })
         (e^{-2\pi\sqrt{-1} l' t_{k+1} }-e^{-2\pi\sqrt{-1} l' t_{k} }) \\
&= \frac{1}{n}\sum_{k=0}^{n}   e^{2\pi\sqrt{-1}(l-l') t_{k+1}}
(1-e^{-2\pi\sqrt{-1} l (t_{k+1}-  t_{k})}) ( 1- e^{2\pi\sqrt{-1} l' ( t_{k+1}-  t_{k}) } )  \\
&\simeq \frac{1}{n}\sum_{k=1}^{n-1}   e^{2\pi\sqrt{-1}(l-l') t_{k+1}}
\left((-2\pi\sqrt{-1} l (t_{k+1}-  t_{k})) ( 2\pi\sqrt{-1} l' ( t_{k+1}-  t_{k}) )\right)
\\
& \simeq \frac{4 \pi^2 \rho_n }{n}l l'\sum_{k=1}^{n-1}   e^{2\pi\sqrt{-1}(l-l') t_{k+1}}(t_{k+1}-t_k)  \simeq \frac{4 \pi^2 \rho_n }{n} (l^2 1_{\{l=l'\}} + ll'1_{\{l\ne l'\}}\frac{1}{n} ) 
    \end{aligned}
\end{align*}
so that
\begin{align*}
    \begin{aligned}
       & \sum_{|l|\leq m_n} \sum_{|l'|\leq m_n}\left( \sum_{k=0}^{n}
   \Delta q^{n}_{k+1,l}
   \overline{\Delta q^{n}_{k+1,l'}}\right)^2 \\
  & \simeq C  \frac{1}{n^4}\sum_{|l|\leq m_n}l^4
   + C' \frac{1}{n^6}
   (\sum_{|l|\leq m_n}l^2)^2
   \simeq C \frac{m^5}{n^4} + C' \left(\frac{m}{n} \right)^6,
    \end{aligned}
\end{align*}
and 
\begin{align*}
    \begin{aligned}
     &  \sum_{|l|\leq m_n} \Delta q^n_{k+1,l}\overline{\Delta q^n_{k'+1,l}} \\
         &= \frac{1}{n^2}
        \sum_{|l|\leq m_n} 
     (e^{2\pi\sqrt{-1} l t_{k+1} }-e^{2\pi\sqrt{-1} l t_k })
         (e^{-2\pi\sqrt{-1} l t_{k'+1} }-e^{-2\pi\sqrt{-1} l t_{k'} })  \\
         &= \frac{1}{n^2} \sum_{|l|\leq m_n} e^{2\pi\sqrt{-1} l (t_{k+1}-t_{k'+1})}
     (1-e^{-2\pi\sqrt{-1} l(t_{k+1}- t_k) })
         (1-e^{-2\pi\sqrt{-1} l (t_{k'+1}- t_{k'}) })  \\
         &\simeq \frac{16 \pi^4}{n^2}\sum_{|l|\leq m_n, l \ne 0 } l^2 e^{2\pi\sqrt{-1} l t_{k+1}}
          e^{-2\pi\sqrt{-1}l t_{k'+1}}
  (t_{k+1}- t_k) 
      (t_{k'+1}- t_{k'}) 
    \end{aligned}
\end{align*}
so that 
\begin{align*}
    \begin{aligned}
       &  \sum_{k=0}^{n}\sum_{k'=0}^{n}\left( \sum_{|l|\leq m_n} 
   \Delta q^{n}_{k+1,l}
   \overline{\Delta q^{n}_{k'+1,l}}\right)^2 \\
  & \simeq \frac{C}{n^2} \sum_{|l_1|\leq m_n}\sum_{|l_2|\leq m_n} l_1^2 l_2^2
 \left|  \sum_{k=1}^{n-1} e^{2\pi\sqrt{-1} (l_1+l_2) t_{k+1}}   (t_{k+1}- t_k)^2 \right|^2
 \\
     & \simeq \frac{C}{n^2} \sum_{|l_1|\leq m_n}\sum_{|l_2|\leq m_n} l_1^2 l_2^2 ( 1_{\{l_1+l_2=0\}} \rho_n^2 + 1_{\{l_1+l_2\ne 0\}}\rho_n^4 ) \\
     & \simeq C \frac{m^5}{n^4}
     + C'\frac{m^6}{n^6}.
    \end{aligned}
\end{align*}

Now we see that 
\begin{align*}
    \mathrm{AMSE} \simeq O \left(\frac{m^4}{n^2} \right).
\end{align*}

\if0
We also have 
\begin{equation*}
\begin{split}
   \sum_{|l| \leq m_n} \sum_{k=1}^{n} |\Delta q^n_{k,l}|^2  
   & = \sum_{|l| \leq m_n} \sum_{k=1}^{n}
   \frac{1}{n} \left|e^{2\pi\sqrt{-1} l (t^j_{k+1}-t^{j}_{k})}-1\right|^2\\
   & \sim \sum_{|l| \leq m_n} \sum_{k=1}^{n}
   \frac{1}{n} \frac{4{\pi}^2l^2}{n^2} \\
   & \simeq O \left(\frac{m_n^3}{n^2}\right),
\end{split}
\end{equation*}
provided $ t^j_{k+1} - t^j_k \sim n^{-1} $, 
which reproduces the second key property \eqref{D1property}. 
\fi

It has been pointed out by 
M. Mancino and S. Sanfelici in \cite{MS1} and \cite{MS2}
that the ``bias"
\begin{align*}
    \mathbf{E} [ V^{j,j'} - \widehat{\Sigma^{j,j'}_{n,m_n}}(0)]
\end{align*}
converges to zero and 
 the mean square error
\begin{align*}
    \mathbf{E} [ (V^{j,j'} - \widehat{\Sigma^{j,j'}_{n,m_n}}(q) )^2
\end{align*}
does not diverge
when $ m_n = o (n) $ as $ n \to \infty $,
which is not the case with the realized volatility 
as we have seen in section \ref{lit}. 
To achieve the consistency, 
we need to either assume that $ v_0 = v_n =v_1 = 0 $, or redefine the estimator as
\begin{align*}
    \begin{aligned}
     & \widehat{\Sigma^{j,j'}_{n,m_n}} :=\frac{\sqrt{n_j n_{j'}}}{2m_n+1} \\
    & \times \sum_{|l|\leq m_n} \left( \sum_{k=1}^{n^j}
      q_{k-1,l}^{n,k}
        \Delta Y^j_k + q_{0,l}^{n,j}Y_0^j
        - q_{n,l}^{n,j} Y^j_n \right)
        \left(\sum_{k'=1}^{n^{j'}}
        q_{k-1,-l}^{n,j'}
      \Delta Y^{j'}_{k'} + q_{0,-l}^{n,j} Y_0^{j'}
    - q_{n,-l}^{n,j'} Y^{j'}_n \right) \\
        &= \frac{\sqrt{n_j n_{j'}}}{2m_n+1} \sum_{|l|\leq m_n} \left( \sum_{k=1}^{n^j}
        Y_k^j \Delta q_{k,l}^{n,j} \right)
        \left(\sum_{k'=1}^{n^{j'}} Y_k^{j'} \Delta q_{k,l}^{n,j'}\right). 
    \end{aligned}
\end{align*}

A consistency result 
under the micro-structure noise
corresponding to the one in section \ref{SIMLintro} in a different setting 
has been proven by S. Parka, S-Y. Hong, and O.Linton \cite{P}. 
Ours generalizes it.

\color{black}
As we have seen, the motivations of the two methods are quite different.
The main concern of the SIML estimator 
is to eliminate the micro-structure noise, 
and it was derived from a heuristic observation that it might maximize 
a (virtual) likelihood function (see \cite[Chapter 3 Section 2]{KSK}). 
On the other hand, 
the Fourier method aims at the estimation of 
spot volatility, though the cut-off effect 
has been well-recognized among the Italian school.
Nonetheless, the two methods reached to a similar solution, INDEPENDENTLY. 
This is really surprising and worth further investigations.
Our tasks in the present paper are then
\begin{enumerate}
    \item to unify the two methods by introducing a general framework (Section \ref{sec:GFE}),
    \item to establish the consistency and the asymptotic normality 
    of the generalized estimator (Section \ref{sec:LTS}), and 
    \item to characterize the estimator as 
    a critical point of a functional (Section \ref{sec:GEP}). 
\end{enumerate}
The second one is based on generalizations of 
the results in Fourier method (\cite{MMFS}, \cite{MS1}, and \cite{ClGl}), while the third
one is an attempt to generalize 
the maximum likelihood picture of the SIML method.

\section{KS likelihood function as an approximation of a KL divergence}

\section*{Appendix}
\addcontentsline{toc}{section}{Appendix}
When placed at the end of a chapter or contribution (as opposed to at the end of the book), the numbering of tables, figures, and equations in the appendix section continues on from that in the main text. Hence please \textit{do not} use the \verb|appendix| command when writing an appendix at the end of your chapter or contribution. If there is only one the appendix is designated ``Appendix'', or ``Appendix 1'', or ``Appendix 2'', etc. if there is more than one.

\fi

%
%
%

\end{document}